\newtheorem{theorem}{Theorem}[section]
\newtheorem{lemma}[theorem]{Lemma}
\newtheorem{prop}[theorem]{Proposition}
\newtheorem{corollary}[theorem]{Corollary}
\theoremstyle{definition}
\newtheorem{definition}[theorem]{Definition}
\newtheorem{example}[theorem]{Example}
\numberwithin{equation}{section}
\title[Non-vanishing of MZV's]{Non-vanishing of multiple zeta values for higher genus curves over finite fields}
\author{Daichi Matsuzuki}
\email{m19044h@math.nagoya-u.ac.jp}
\address{Graduate School of Mathematics, Nagoya University, 
Furo-cho, Chikusa-ku, Nagoya, 464-8602, Japan}
\date{\today}
\begin{document}
\maketitle
\begin{abstract}
    In this paper, we show that $\infty$-adic multiple zeta values associated to the function field of an algebraic curve of higher genus over a finite field are not zero, under certain assumption on the gap sequence associated to the rational point $\infty$ on the given curve. Using arguments and results of Sheats and Thakur for the case of the projective line, we calculate the absolute values of power sums in the series defining multiple zeta values, and show that the calculation implies the non-vanishing result.
\end{abstract}
\tableofcontents

\section{Introduction}

Let $q$ be a power of a fixed prime $p$. Let $k:=\mathbb{F}_q(\theta)$ be the function field of $\mathbb{P}^{1}$ over $\mathbb{F}_q$, where $\theta$ is a variable and $1/\theta$ is the uniformizer of the infinite place $\infty$ of $\mathbb{P}^{1}$. Let $A:=\mathbb{F}_q[\theta]$ be the ring of rational functions on $\mathbb{P}^{1}$ regular away from $\infty$, and let $k_\infty$ be the completion $\mathbb{F}_q((1/\theta))$ of $k$ at the place $\infty$. In this setting,
Thakur~\cite{ThakurBook} introduced \textit{multiple zeta values} (\textit{MZV's} for short) as follows: for given positive numbers $s_1,\,\dots,\,s_d$ we put
\begin{equation}
    \zeta_A(s_1,\,\dots,\,s_d):=\sum_{\substack{a_1,\,\dots,\,a_d \in A\text{: monic}\\\deg a_1>\cdots > \deg a_d \geq0}} \frac{1}{a_1^{s_1}\cdots a_d^{s_d}} \in k_\infty. \label{DefMZVs}
\end{equation}
The number $d$ and the sum $s_1+\cdots+s_d$ are called \textit{depth} and \textit{weight} of the presentation $\zeta_{A}(s_1,\,\dots,\,s_d)$, respectively. These values are higher depth generalization of Carlitz zeta values introduced in \cite{Carlitz1935}.

Thakur~\cite{Thakur2010} proved $q$-shuffle relations of his MZV's which show that a product of two given MZV's can be expressed as $\mathbb{F}_p$-linear combination of  MZV's of the same weight. Therefore, the $k$-linear subspace of $k_\infty$ spanned by MZV's forms a $k$-algebra similarly to the case of real MZV's in the classical setting. 
Unlike the case of characteristic zero, it is not clear whether the MZV's \eqref{DefMZVs} are non-zero, 
but Thakur~\cite{Thakur2009a} proved that they are indeed non-vanishing by using the arguments and results of Sheats~\cite{Sheats1998} which he performed in order to solve Goss' positive characteristic analogue of Riemann hypothesis.

In recent years, transcendence theory for MZV's are well-developed.  Using the technique called Anderson-Brownawell-Papanikolas criterion~\cite{Anderson2004}, Chang~\cite{Chang2014} showed that the $k$-algebra of MZV's is graded (by weights). As a corollary, we can see that all MZV's are transcendental over $k$, generalizing the work of Yu~\cite{Yu1991} in the depth one case. Furthermore, Chang~\cite{Chang2016} established an effective criterion to calculate the dimensions of the $k$-vector sub-space spanned by MZV's of depth two and fixed weight. Later on, Chen and Harada~\cite{Chen2021} generalized Chang's methods to obtain lower bounds of the dimensions of $k$-linear subspaces spanned by MZV's of fixed weight and depth. Using Papanikolas' theory in terms of $t$-motives, Mishiba~\cite{Mishiba2015a, Mishiba2017} established algebraic independence results for certain families of MZV's. 
Recently, two significant conjectures addressed as Todd's conjecture and Thakur's basis conjecture, are solved independently in \cite{Chang2022} and \cite{Im2022}. The former conjecture predicts the precise dimension of the $k$-vector space spanned by MZV's of fixed weight (regarded as an analogue of Zagier's dimension conjecture for real MZV's). The latter gives an explicit $k$-basis of the subspace generated by MZV's of fixed weight  (viewed as an analogue of Hoffman's basis conjecture).

It should be emphasized that the non-vanishing result of Thakur is fundamental for these transcendence results aforementioned,  and it is the major motivation of this paper to prove non-vanishing property (Theorem \ref{main}) for the MZV's associated to the function field of an algebraic curve over a finite field $\mathbb{F}_q$ defined by Lara~Rodr\'{i}guez and Thakur under certain assumption. We believe that our results will be the first step of studying transcendence problem for MZV's associated to function fields of  higher genus curves over finite fields.

 Let $C$ be a smooth, projective and geometrically connected curve  over $\mathbb{F}_q$ with a rational point $\infty$. 
Let $k$ be the function field of $C$ over $\mathbb{F}_q$, and we denote by $A$ the ring $\Gamma( \mathcal{O}_C,\, C\setminus \{\infty\})$, which is the ring of rational functions on $C$ regular away from $\infty$. We pick a sign function $k^\times \rightarrow \mathbb{F}_q^\times$, that is a group homomorphism of multiplicative group which is an identity map on $\mathbb{F}_q^\times\subset k^\times$. We say $a\in A\setminus \{0\}$ is \textit{monic} if its sign is $1$.

The followings are the definition of MZV's associated to function fields of curves which have higher genus introduced by Lara Rodr\'{i}guez and Thakur. Let us note that even in the case $r=1$, the value introduced above does not coincides with the special value of Goss' zeta function introduced in \cite{Goss1983} unless the ring $A$ is principal.
\begin{definition}[{\cite{LaraRodriguez2021}}]\label{DefMZV}
    For positive integers $s_1,\,s_2,\,\dots,\,s_r$, we put
    \begin{equation}
        \zeta_A(s_1,\,s_2,\,\dots,\,s_r):=\sum_{\substack{a_1,\,\dots,a_r \text{:monic}\\ \deg a_1>\cdots> \deg a_r \geq0}} \frac{1}{a_1^{s_1}\cdots a_r^{s_r}} \in k_{\infty}.
    \end{equation}
    Elements in this form are called \textit{multiple zeta values}.
\end{definition}


Our main results yields non-vanishing of MZV's under the certain assumption on the non-gap sequence (see Section \ref{Pre} for the definition) as follows:
\begin{theorem}\label{main}
     Let $C$ be a smooth, projective, geometrically connected curve over $\mathbb{F}_q$ with an $\mathbb{F}_q$-rational point $\infty$. If one of the following conditions:
    \begin{enumerate}[(a)] 
        \item the non-gap sequence of $\infty$ is $\{0,\,g+1,\,g+2,\,\dots \}$, \label{condiordinary}
        \item the non-gap sequence of $\infty$ is $\{0,\,2,\,\dots,\,2g-2,\,2g,\,2g+1,\,2g+2,\,\dots \}$, \label{condihyperelliptic}
    \end{enumerate}
    is satisfied, then the multiple zeta value $\zeta_A(s_1,\,s_2,\,\dots,\,s_r)$ is non-zero for any $(s_1,\,s_2,\,\dots,\,s_r)\in \mathbb{N}^{r}$.
\end{theorem}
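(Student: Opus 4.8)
The plan is to follow, in higher genus, the route of Thakur~\cite{Thakur2009a} over $\mathbb{P}^{1}$: reduce the non-vanishing of $\zeta_A(s_1,\dots,s_r)$ to precise information about the $\infty$-adic valuations of the \emph{power sums}
\[
  S_d(s):=\sum_{\substack{a\in A\text{ monic}\\ \deg a=d}}\frac{1}{a^{s}}\in k_\infty,\qquad d\ge 0,\ s\ge 1,
\]
where $S_d(s)=0$ whenever $d$ is a gap of $\infty$, since then $A$ has no element with a pole of order exactly $d$ at $\infty$. Sorting the series of Definition~\ref{DefMZV} by the vector of pole orders $(d_1,\dots,d_r)$ gives
\[
  \zeta_A(s_1,\dots,s_r)=\sum_{d_1>\cdots>d_r\ge 0}\ \prod_{i=1}^{r}S_{d_i}(s_i),
\]
so $v_\infty\bigl(\prod_i S_{d_i}(s_i)\bigr)=\sum_i v_\infty(S_{d_i}(s_i))$ for the valuation $v_\infty$ at $\infty$. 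It therefore suffices to prove: (i) $S_d(s)\ne 0$ for every non-gap $d$ and every $s\ge 1$; and (ii) for fixed $s\ge 1$ the function $d\mapsto v_\infty(S_d(s))$ is \emph{strictly increasing} on the set of non-gaps of $\infty$. Indeed, writing $0=e_0<e_1<\cdots$ for the non-gaps, an admissible tuple satisfies $d_{r-i}\ge e_i$ for all $i$, with equality throughout only for the tuple of the $r$ smallest non-gaps taken in decreasing order; by (ii) that tuple is the unique minimizer of $\sum_i v_\infty(S_{d_i}(s_i))$, its contribution cannot be cancelled by the others, and hence $\zeta_A(s_1,\dots,s_r)\ne 0$.

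The heart of the matter is the evaluation of $v_\infty(S_d(s))$ for a non-gap $d$, and this is exactly where conditions (a) and (b) are used, to reduce it to the computations of Sheats~\cite{Sheats1998} over $\mathbb{P}^{1}$. Under (a) the gaps are precisely $1,\dots,g$, so for $d\ge g+1$ the Riemann--Roch space $L(d\,\infty)$ has an $\mathbb{F}_q$-basis $f_0=1,f_{g+1},f_{g+2},\dots,f_d$ with $f_j$ of pole order exactly $j$ and $f_d$ monic, and each monic $a$ of degree $d$ is $f_d+\sum_j c_jf_j$ with $c_j\in\mathbb{F}_q$, $j$ running over the non-gaps $<d$. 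Expanding $a^{-s}=f_d^{-s}\bigl(1+\sum_j c_jf_j/f_d\bigr)^{-s}$ by the multinomial theorem and summing over the $c_j$ kills every monomial whose exponent vector has an entry that is not a positive multiple of $q-1$ (because $\sum_{c\in\mathbb{F}_q}c^{k}=0$ unless $(q-1)\mid k$ and $k\ge 1$); what remains is an explicit series in the $f_j/f_d$ with coefficients that are multinomial coefficients modulo $p$, and locating its term of least valuation, together with the non-cancellation of the minimizing terms, is precisely the combinatorial optimization over digit vectors solved by Sheats, the relevant index set being here the shifted interval $\{0\}\cup\{g+1,\dots,d-1\}$. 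Under (b) one writes $A=\mathbb{F}_q[x]\oplus\mathbb{F}_q[x]\,y$ with $\deg x=2$, $\deg y=2g+1$, $y^2=f(x)\in\mathbb{F}_q[x]$: for even $d\le 2g$ the monic elements of degree $d$ are exactly the $P(x)$ with $P$ monic of degree $d/2$, so $S_d(s)=S^{\mathbb{F}_q[x]}_{d/2}(s)$ is literally a polynomial power sum; for $d>2g$ one expands $(P+Qy)^{-s}$ about its dominant term, uses $y^2=f(x)$ to eliminate higher powers of $y$, and carries out the resulting sums over the free polynomials $P$ and $Q$ --- invoking the standard vanishing of $\sum_{\deg T\le N}T^{k}$ for $k<(N+1)(q-1)$ --- to reduce again to $\mathbb{F}_q[x]$-power sums of positive and negative exponent of the type controlled by Sheats and Thakur. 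In either case one obtains a closed formula for $v_\infty(S_d(s))$, and property (ii) is read off from it as in Thakur's treatment of $\mathbb{P}^{1}$.

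I expect the main obstacle to be case (b) for the non-gaps $d>2g$, where the power sum is genuinely twisted by $y$: one must show that the contributions of the two free polynomials and of $y$ decouple at the level of valuations, that the several competing leading contributions do not cancel modulo $p$, and that Sheats' non-cancellation and degree formula survive both the presence of $y$ and the bounded-degree (rather than fixed-degree) ranges of summation produced by the expansion. A secondary technical point is to verify that the shifted index sets $\{0\}\cup\{g+1,\dots,d-1\}$ in case (a) and the mixed even/consecutive index sets in case (b) still fall within the scope of Sheats' optimization, so that the formula for $v_\infty(S_d(s))$ --- and hence the monotonicity (ii) --- holds for all $s\ge 1$ without exception.
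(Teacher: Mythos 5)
Your skeleton is the same as the paper's: sort the series by pole orders, compute $v_\infty(S_d(s))$ at the non-gaps, show it is strictly increasing in $d$ for fixed $s$, and conclude by the ultrametric inequality that the single term $S_{e_{r-1}}(s_1)\cdots S_{e_0}(s_r)$ dominates. The expansion you describe in case (a) --- writing a monic element of degree $d$ as the fixed monic element plus an $\mathbb{F}_q$-combination of basis elements with non-gap pole orders, expanding $a^{-s}$ binomially and multinomially, and using $\sum_{c\in\mathbb{F}_q}c^{k}$ together with Lucas' theorem to isolate the surviving terms --- is exactly the paper's computation. But at the two places where the proof has actual content, your proposal records a hope rather than an argument. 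In case (a) you assert that the ``shifted'' index set $\{0\}\cup\{g+1,\dots,d-1\}$ ``still falls within the scope of Sheats' optimization'' and defer this as a secondary point; it is in fact the whole point. The valuation of the surviving term indexed by $(m_0,\dots,m_{i-1})$ is $d_is+\operatorname{WS}_C(m_0,\dots,m_{i-1})$ with weights $d_i-d_j$, and one must prove that $(G_0,\dots,G_{i-1})$ of Definition \ref{seqG} is still the \emph{unique} minimizer among admissible tuples. The paper does this without redoing any optimization: under (a) one has $\operatorname{WS}_C(m_0,\dots,m_{i-1})=\operatorname{WS}_{\mathbb{P}^1}(m_0,\dots,m_{i-1})+gm_0$, and Lemma \ref{FandamentalLemma} together with $G_0\le m_0$ (forced by the minimality of $G_0$, since no carry in the full sum implies no carry in $(s-1)+m_0$) yields the strict inequality and hence $v_\infty(S_{d_i}(s))=d_is+\operatorname{WS}_C(G_0,\dots,G_{i-1})$.

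More seriously, in case (b) you abandon the uniform expansion and pass to a hyperelliptic model $A=\mathbb{F}_q[x]\oplus\mathbb{F}_q[x]\,y$, proposing to expand $(P+Qy)^{-s}$ and sum over two free polynomial families. You yourself identify the resulting problems (decoupling of valuations, non-cancellation mod $p$, bounded-degree rather than fixed-degree summation ranges) as the main obstacle, and nothing in the sketch resolves them; note also that the vanishing statement you invoke for $\sum_{\deg T\le N}T^{k}$ concerns nonnegative exponents and does not directly control the negative-exponent expansions you create. This detour is unnecessary: the filtration-basis expansion of case (a) works verbatim for any curve, and condition (b) enters only through the identity $\operatorname{WS}_C(m_0,\dots,m_{i-1})=\operatorname{WS}_{\mathbb{P}^1}(m_0,\dots,m_{i-1})+\operatorname{WS}_{\mathbb{P}^1}(m_0,\dots,m_{i'-1})$ with $i'=\min(i,g)$, after which two applications of Lemma \ref{FandamentalLemma} give strict minimality of the $G$-tuple, the same valuation formula, and the strict monotonicity $v_\infty(S_{d_{i-1}}(s))-v_\infty(S_{d_i}(s))=(d_{i-1}-d_i)(s+G_0+\cdots+G_{i-1})<0$ that your final step requires. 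As written, both of these essential steps are left unproved in your proposal.
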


If the curve $C$ is an elliptic curve, then the Riemann-Roch theorem implies that conditions \eqref{condiordinary} and \eqref{condihyperelliptic} are satisfied (see~Example \ref{Exampled_i} (2)), so we have
\begin{corollary}\label{Cortomain}
MZV's associated with elliptic curves are non-zero.
\end{corollary}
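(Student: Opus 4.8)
The plan is to derive the corollary from Theorem~\ref{main}. At $g=1$ both hypotheses \eqref{condiordinary} and \eqref{condihyperelliptic} reduce to the single assertion that the non-gap sequence (Weierstrass semigroup) of $\infty$ is $\{0,2,3,4,\dots\}$, so it suffices to check this for an elliptic curve and then quote Theorem~\ref{main}. This is a Riemann--Roch computation: an elliptic curve has canonical class of degree $0$, so $\ell(K-n\infty)=0$ for every $n\geq 1$ and hence
\[
\ell(n\infty)=\deg(n\infty)+1-g+\ell(K-n\infty)=n\qquad(n\geq 1),\qquad \ell(0)=1 .
\]
Thus $\dim_{\mathbb{F}_q}L(n\infty)$ jumps by $1$ at $n=0$, is unchanged at $n=1$, and jumps by $1$ at every $n\geq 2$: the only gap at $\infty$ is $1$, and the non-gap semigroup is $\{0,2,3,\dots\}$, which is the $g=1$ instance of \eqref{condiordinary}. (Concretely, a Weierstrass model $A=\mathbb{F}_q[x,y]$ with $v_\infty(x)=-2$, $v_\infty(y)=-3$ has $A$ spanned by the monomials $x^{i}y^{j}$, $j\in\{0,1\}$, of degrees $2i$ and $2i+3$, realising exactly $\{0,2,3,4,\dots\}$.) Hence Theorem~\ref{main}\eqref{condiordinary} applies with $g=1$ and gives $\zeta_A(s_1,\dots,s_r)\neq 0$ for all $(s_1,\dots,s_r)\in\mathbb{N}^{r}$. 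This settles Corollary~\ref{Cortomain}, which carries no real difficulty of its own once Theorem~\ref{main} is in hand.

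The content therefore lies in Theorem~\ref{main}, and I would prove it as follows. Grouping the monic $a_i$ by their degrees $d_i$ turns the defining series into
\[
\zeta_A(s_1,\dots,s_r)=\sum_{\infty>d_1>d_2>\cdots>d_r\geq 0}\ S_{d_1}(s_1)\,S_{d_2}(s_2)\cdots S_{d_r}(s_r),\qquad S_d(s):=\sum_{\substack{a\in A\ \text{monic}\\ \deg a=d}}a^{-s}\in k_\infty ,
\]
where the outer sum really runs over non-gaps $d_i$ (for a gap $d$ the power sum $S_d(s)$ is an empty sum, hence $0$). Since $k_\infty$ is complete and non-archimedean and the series converges there, it suffices to prove that $\deg\bigl(S_{d_1}(s_1)\cdots S_{d_r}(s_r)\bigr)=\sum_i\deg S_{d_i}(s_i)$ attains a \emph{strict} maximum over the admissible tuples $(d_1,\dots,d_r)$: that term is then uncancelled and the whole sum is nonzero. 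So everything comes down to (i) computing $\deg S_d(s)=-v_\infty(S_d(s))$, in particular showing $S_d(s)\neq 0$ for every non-gap $d$, and (ii) a combinatorial maximisation.

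For (i) I would use \eqref{condiordinary}/\eqref{condihyperelliptic} to import the $\mathbb{P}^{1}$ computations of Carlitz~\cite{Carlitz1935}, Sheats~\cite{Sheats1998} and Thakur~\cite{Thakur2009a}. Under \eqref{condiordinary} one has $S_0(s)=1$, and for a non-gap $d\geq g+1$ the monic elements of degree $d$ are exactly the coset $f_d+L((d-1)\infty)$ of any fixed monic $f_d$ of degree $d$, with $L((d-1)\infty)$ having an $\mathbb{F}_q$-basis consisting of one function in each degree $0,g+1,g+2,\dots,d-1$; under \eqref{condihyperelliptic} one instead uses the hyperelliptic presentation $A=\mathbb{F}_q[x]\oplus\mathbb{F}_q[x]\,y$ with $v_\infty(x)=-2$, $v_\infty(y)=-(2g+1)$, which sorts the monic elements of a fixed degree into those of shape $x^{m}+\cdots$ and those of shape $x^{m}y+\cdots$. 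Expanding $a^{-s}$ in a uniformizer at $\infty$ and evaluating the ensuing finite power sums $\sum_{c\in\mathbb{F}_q}c^{m}$ exactly as in the cited work yields $\deg S_d(s)$ as the value of a ``greedy'' optimisation over base-$q$ expansions of $s$, of the same shape as Sheats' formula but with the available expansion positions dictated by the non-gap semigroup. The facts I would extract are: $S_d(s)\neq 0$ for every non-gap $d$ and every $s\geq 1$; $\deg S_0(s)=0$ while $\deg S_d(s)\leq -ds<0$ for $d\geq 1$; and $d\mapsto\deg S_d(s)$ is strictly decreasing along the non-gap semigroup. Granting these, (ii) is short: over all admissible $(d_1,\dots,d_r)$ the sum $\sum_i\deg S_{d_i}(s_i)$ is maximised \emph{uniquely} at the smallest admissible tuple ($d_r=0$, $d_{r-1}$ the least positive non-gap, and so on), since any other tuple has a coordinate that can be lowered to the next smaller non-gap while preserving admissibility, which strictly increases the sum. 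Uniqueness of the maximiser is precisely the strict-dominant-term property, so $\zeta_A(s_1,\dots,s_r)\neq 0$; in fact its $\infty$-adic leading term is that of the dominant product of power sums.

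The main obstacle is step (i). Sheats' evaluation of $\deg S_d(s)$ for $\mathbb{F}_q[\theta]$ is a delicate optimisation over base-$q$ expansions of the exponent subject to carry-type constraints, and one must check that it survives the distortion coming from the non-gap structure: in case \eqref{condiordinary} the absent low degrees $1,\dots,g$ remove certain free $\mathbb{F}_q$-parameters from the expansion of a generic monic $a$, and in case \eqref{condihyperelliptic} the two-component module structure---the dichotomy between ``$x$-polynomials'' and ``$y$ times $x$-polynomials'', together with the relation $y^{2}=f(x)$---does so in a subtler, degree-parity-dependent way. Verifying that one still obtains a closed formula, that $S_d(s)$ never vanishes, and above all that the formula retains strict monotonicity in $d$ (so that no second admissible tuple ties for the maximum) is where the bulk of the work sits; the elliptic-curve corollary is then the especially transparent case $g=1$, a single gap at $1$, with $L((d-1)\infty)$ having a basis in degrees $0,2,3,\dots,d-1$.
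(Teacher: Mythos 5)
Your derivation of the corollary is exactly the paper's: Riemann--Roch shows the non-gap sequence of an elliptic curve at $\infty$ is $\{0,2,3,4,\dots\}$ (the $g=1$ case of both hypotheses), and Theorem~\ref{main} then applies; this matches Example~\ref{Exampled_i}(2) and the paper's one-line argument. Your additional sketch of how one would prove Theorem~\ref{main} itself is not needed for this statement, but it is broadly consistent with the paper's actual method (valuations of the power sums $S_{d_i}(s)$ via Sheats--Thakur and a strictly dominant term in the non-archimedean sum).
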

Our strategy of proof is similar to that given by Thakur (\cite[Theorem 4]{Thakur2009a}).
The crucial step is the calculation of the $\infty$-adic valuation of the finite power sum $
S_{d_i}(s)$
(see Definition \ref{DefPowersum}), by which we can observe that the valuation of $S_{d_i}(s)$ is strictly increasing in $i$ for each fixed $s$, and can conclude that the absolute values of MZV's are non-zero via non-archimedean triangle inequality.
In order to calculate the valuation of the sum $S_d(s)$, we express it as series \eqref{expansion} by binomial and multinomial expansion. Using the multinonial version of Lucas' theorem (Theorem \ref{Lucas Theorem}) and applying arguments and results of Sheats and Thakur, we can find a unique term of the series which has strictly lower valuation than any other non-zero terms.

The section \ref{Pre} is devoted to review of some preliminaries needed to show our main theorem. We recall the notions as carry over base $p$, gap sequence, and power sums. We review the construction of the sequence $\{ G_i\}$ (Definition \ref{seqG}), which is used to calculate the valuation of $S_d(s)$ and also review the result Lemma \ref{FandamentalLemma} of Sheats, which played pivotal role in the proof of non-vanishing of MZV's associated to projective line by Thakur. In section \ref{proof}, we prove that the sequence $\{ G_i\}$ can be used to obtain the valuation of the power sum $S_d(s)$ also in our cases and complete the proof of Theorem \ref{main}.

\section{Preliminaries}\label{Pre}
In what follows, we fix a smooth, projective, geometrically connected curve $C$ over $\mathbb{F}_q$ with an $\mathbb{F}_q$-rational point $\infty$.
In this section, we set the notation and recall some preliminaries needed to prove our main theorem.

\begin{definition}\label{DefNonGapSeq}
We define a strictly increasing sequence  $\{d_0=0,\,d_1,\,d_2,\dots\}$ of non-negative integers to be the complement of \textit{gap sequence}, that is, the sequence characterized by
\begin{align}
    \dim L(i \infty)/L((i-1)\infty)=
    \begin{cases}
    1 & \text{$i=d_j$ for some $j$} \\
    0 & \text{otherwise},
    \end{cases}
\end{align}
for $i\geq 1$. Here, the symbol $L(i \infty)$ stands for the Riemann-Roch space, the space of elements in $A$ whose order of poles at $\infty$ is equal to or less than $i$. We call this sequence the \textit{non-gap sequence} of $C$ associated with $\infty$. This is also known as the \textit{Weierstrass semigroup}, see \cite[Definition 6.88]{Hirschfeld2008} or \cite[8-37 (a)]{Fulton1989}, for example.    
\end{definition}

By Riemann-Roch theorem, we have the following examples.
\begin{example}\label{Exampled_i}
\begin{enumerate}
    \item In the case $C={\mathbb{P}^1}$, we have $d_j=j$.
    \item If $C=E$ is an elliptic curve, then we have $d_0=0$ and $d_j=j+1$ for $j \geq1$.
\end{enumerate}
\end{example}

For each $i=0,\,1,\,2,\,\dots$, we chose a monic element in $A$ of degree $d_i$ (that is, the element in $L(d_i \infty) \setminus L((d_i-1)\infty)$) and denote it by $\xi_i$. Then each monic element $\eta$ of degree $d_i$ can be written in the form
\begin{equation}
    \eta=f_0 \xi_0+\cdots +f_{i-1} \xi_{i-1}+ \xi_i
\end{equation}
where $f_0,\,\dots,\,f_{i-1}$ are elements of $\mathbb{F}_q$ as $\eta-\xi_i$ is an element of $L((d_i-1)\infty)=L(d_{i-1}\infty)$, which has $\mathbb{F}_q$-basis $(\xi_0,\,\dotsm,\,\xi_{i-1})$.
\begin{definition}\label{DefPowersum}
For an integer $s$ and non-negative integer $d$, we put
\begin{equation}
S_d(s):=\sum\frac{1}{a^s},
\end{equation}
where $a$ runs through the set of all monic elements of degree $d$.
\end{definition}
Let us note that MZV's can be written using these power sums as follows:
\begin{align}
    \zeta_A(s_1,\,\dots,\,s_r)&=\sum_{n_1>\cdots>n_r\geq 0}S_{n_1}(s_1)\cdots S_{n_r}(s_r)\\
    &=\sum_{i_1>\cdots>i_r\geq 0}S_{d_{i_1}}(s_1)\cdots S_{d_{i_r}}(s_r)
    
\end{align}

Let us consider non-negative integers $n,\,m_0,\,\dots,\,m_i$ such that $n=m_0+\cdots+m_i$ and their $p$-adic expansions
\begin{equation}
    n=\sum_{l \geq0} n_l p^l,\quad m_j=\sum_{l \geq 0}m_{j,\,l} p^{l}.
\end{equation}
\begin{definition}We say that the summation $n=m_0+\cdots+m_i$ \textit{has no carry over base $p$} if $m_{0,\,l}+\cdots+m_{i,\,l} \leq p-1$ for each $l$. 
\end{definition}

We need two properties of binomial and multinomial coefficients. We define multinomial coefficients by the generating series as follows:
\begin{equation}
    (X_0+\cdots+X_i)^n=\sum_{m_0,\,\dots,\,m_r \in \,\mathbb{Z}}\binom{n}{m_0,\,\dots,\,m_i}X_0^{m_0}\cdots X_i^{m_i}.
\end{equation}
The first one is the formula
\begin{equation}
    \binom{-s}{y}=(-1)^y\binom{y-s-1}{y} \label{binomprop1}
\end{equation}
which holds for any integer $y$, see \cite[(5.14)]{GrahamBook} for example.
The second one is the following multinomial version of famous Lucas' Theorem:
\begin{theorem}[Lucas' Theorem]\label{Lucas Theorem}
The congruence
\begin{equation}
    \binom{n}{m_0,\,\dots,\,m_i} \equiv \prod_{l \geq 0} \binom{n_i}{m_{0,\,l},\,\dots,\,m_{i,\,l}} \mod p
\end{equation}
    holds. 
\end{theorem}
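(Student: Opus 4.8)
The plan is to reduce the statement to the elementary identity $(X_0+\cdots+X_i)^p \equiv X_0^p+\cdots+X_i^p \pmod p$ in the polynomial ring $\mathbb{F}_p[X_0,\dots,X_i]$, and then to compare coefficients. Concretely, I would regard $\binom{n}{m_0,\dots,m_i}\bmod p$ as the coefficient of the monomial $X_0^{m_0}\cdots X_i^{m_i}$ in the reduction of $(X_0+\cdots+X_i)^n$, which is exactly its defining property in the displayed generating series. (Here the digit $n_l$ of $n$ is meant on the right-hand side.)

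First I would establish the \emph{Frobenius factorization}. Since every intermediate coefficient in the expansion of a $p$-th power is divisible by $p$, the freshman's dream gives $(X_0+\cdots+X_i)^{p^l}\equiv X_0^{p^l}+\cdots+X_i^{p^l}\pmod p$ for each $l\geq 0$. Writing $n=\sum_{l\geq 0}n_l p^l$ in base $p$ and using $(X_0+\cdots+X_i)^{n_l p^l}=\bigl((X_0+\cdots+X_i)^{p^l}\bigr)^{n_l}$, this yields, modulo $p$,
\[
(X_0+\cdots+X_i)^n \equiv \prod_{l\geq 0}\bigl(X_0^{p^l}+\cdots+X_i^{p^l}\bigr)^{n_l}.
\]

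Next I would expand each factor on the right by the ordinary multinomial theorem, so that $(X_0^{p^l}+\cdots+X_i^{p^l})^{n_l}$ contributes a term $\binom{n_l}{m_{0,l},\dots,m_{i,l}}X_0^{m_{0,l}p^l}\cdots X_i^{m_{i,l}p^l}$ for each decomposition $m_{0,l}+\cdots+m_{i,l}=n_l$. Multiplying over all $l$, the exponent of $X_j$ in a generic product term is $\sum_{l}m_{j,l}p^l$. The crucial observation is that, because $0\leq m_{j,l}\leq n_l\leq p-1$, each collection $(m_{j,l})_l$ consists of legitimate base-$p$ digits; hence a prescribed target exponent $m_j$ of $X_j$ determines the digits $m_{j,l}$ \emph{uniquely}, by uniqueness of the base-$p$ representation. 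Consequently there is exactly one way to obtain $X_0^{m_0}\cdots X_i^{m_i}$ in the product, and its coefficient is $\prod_{l\geq 0}\binom{n_l}{m_{0,l},\dots,m_{i,l}}$, where $m_{j,l}$ is now the $l$-th base-$p$ digit of $m_j$. Matching this with the left-hand coefficient $\binom{n}{m_0,\dots,m_i}$ gives the asserted congruence.

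The step requiring the most care is this coefficient comparison, and in particular the bookkeeping for the vanishing cases: if the decomposition $n=m_0+\cdots+m_i$ carries over in some digit $l$, so that $m_{0,l}+\cdots+m_{i,l}\neq n_l$, then the corresponding factor $\binom{n_l}{m_{0,l},\dots,m_{i,l}}$ is zero by convention, and correspondingly the monomial $X_0^{m_0}\cdots X_i^{m_i}$ simply does not appear in the product. I would verify that the two conventions match, so that both sides vanish simultaneously. With that caveat handled, the equality of coefficients in $\mathbb{F}_p[X_0,\dots,X_i]$ is precisely the claimed congruence.
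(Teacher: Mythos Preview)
Your argument is correct and is the standard generating-function proof of the multinomial Lucas congruence: reduce modulo $p$, use the Frobenius identity $(X_0+\cdots+X_i)^{p^l}\equiv X_0^{p^l}+\cdots+X_i^{p^l}$ to factor $(X_0+\cdots+X_i)^n$ according to the base-$p$ digits of $n$, and then read off coefficients using uniqueness of base-$p$ expansions. Your handling of the carrying case is also right: if some digit sum $m_{0,l}+\cdots+m_{i,l}$ differs from $n_l$, the corresponding multinomial factor vanishes by convention and, simultaneously, no admissible choice of exponents in the product hits the target monomial. You also correctly flag the typo in the displayed formula (the $n_i$ on the right should be $n_l$).

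There is nothing to compare against in the paper itself: the theorem is quoted as the classical Lucas congruence without proof and is used only as input to the subsequent lemma characterising when $\binom{n}{m_0,\dots,m_i}\not\equiv 0\pmod p$. So your proof stands on its own; it is not a different route from the paper's, because the paper offers none.
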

By the above congruence, one has the following criterion on vanishing of multinomial coefficients, which is well-known by experts but we give a short proof in order to make the present paper self-contained.
\begin{lemma}\label{multinomialcriterion}
    The multinomial coefficient $$\binom{n}{m_0,\,\dots,\,m_i}$$ is non-zero in $\mathbb{F}_p$ if and only if the sum $n=m_0+\cdots+m_i$ has no carry over base $p$.
\end{lemma}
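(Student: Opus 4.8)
The plan is to read off the criterion directly from Lucas' Theorem (Theorem \ref{Lucas Theorem}) combined with the elementary behaviour of a multinomial coefficient all of whose entries are a single $p$-adic digit. First I would record the reduction step: since $p$ is prime, the product $\prod_{l\geq 0}\binom{n_l}{m_{0,\,l},\,\dots,\,m_{i,\,l}}$ appearing in Theorem \ref{Lucas Theorem} is non-zero in $\mathbb{F}_p$ if and only if every single factor $\binom{n_l}{m_{0,\,l},\,\dots,\,m_{i,\,l}}$ is non-zero in $\mathbb{F}_p$. Hence the whole question reduces to deciding when a multinomial coefficient whose entries all lie in $\{0,\,1,\,\dots,\,p-1\}$ is a unit modulo $p$.

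Next I would settle that single-digit case. If $m_{0,\,l}+\cdots+m_{i,\,l}\neq n_l$, then $\binom{n_l}{m_{0,\,l},\,\dots,\,m_{i,\,l}}$ is already zero as an integer by the convention fixing the defining generating series, hence zero in $\mathbb{F}_p$. If instead $m_{0,\,l}+\cdots+m_{i,\,l}=n_l$, then $\binom{n_l}{m_{0,\,l},\,\dots,\,m_{i,\,l}}=n_l!/(m_{0,\,l}!\cdots m_{i,\,l}!)$ is a positive integer which divides $n_l!$; since $0\leq n_l\leq p-1$ and $p$ is prime, $n_l!$ is coprime to $p$, and therefore so is any of its divisors. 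Thus $\binom{n_l}{m_{0,\,l},\,\dots,\,m_{i,\,l}}\not\equiv 0 \pmod p$ precisely when $m_{0,\,l}+\cdots+m_{i,\,l}=n_l$.

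Finally I would identify the condition ``no carry over base $p$'' with ``$m_{0,\,l}+\cdots+m_{i,\,l}=n_l$ for every $l$''. Indeed, if $m_{0,\,l}+\cdots+m_{i,\,l}\leq p-1$ for all $l$, then $n=\sum_{j}m_j=\sum_{l}\bigl(\sum_{j}m_{j,\,l}\bigr)p^l$ exhibits $n$ in base $p$ with all digits in $\{0,\,\dots,\,p-1\}$, so by uniqueness of the $p$-adic expansion $n_l=\sum_{j}m_{j,\,l}$ for all $l$; conversely $n_l=\sum_{j}m_{j,\,l}$ forces $\sum_{j}m_{j,\,l}=n_l\leq p-1$. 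Chaining the three steps — Lucas' congruence, the single-digit analysis, and this reformulation — yields the lemma. I do not expect a genuine obstacle; the only points requiring a little care are the appeal to primality of $p$ (to pass from ``product non-zero'' to ``each factor non-zero'' and to see that a single-digit multinomial coefficient is a unit) and the appeal to uniqueness of the base-$p$ representation in the last step.
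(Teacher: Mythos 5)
Your proposal is correct and follows essentially the same route as the paper: both deduce the criterion from Lucas' Theorem by analyzing when each single-digit factor $\binom{n_l}{m_{0,\,l},\,\dots,\,m_{i,\,l}}$ vanishes, using that these digits are at most $p-1$ so that a nonzero factor is prime to $p$. The only difference is cosmetic — you make explicit the appeal to uniqueness of the base-$p$ expansion (to identify ``no carry'' with $n_l=\sum_j m_{j,\,l}$ for all $l$) and to primality of $p$ for the product of factors, points the paper's proof treats implicitly.
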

\begin{proof}
    Let us suppose the sum $n=m_0+\cdots+m_i$ has carry over base $p$ and take minimal $l$ such that $m_{0,\,l}+\cdots+m_{i,\,l} > p-1$; then we have $m_{0,\,l}+\cdots+m_{i,\,l} > n_l$ as $0\leq n_l \leq p-1$ and have $$\binom{n_l}{m_{0,\,l},\,\dots,\,m_{i,\,l}}=0$$ by definition. Hence we obtain
    \begin{equation}
        \binom{n}{m_0,\,\dots,\,m_i} \equiv 0 \mod p
    \end{equation}Let us suppose conversely that the sum $n=m_0+\cdots+m_i$ has no carry over base $p$; then, for each $l$, we have $n_l=m_{0,\,l}+\cdots+m_{i,\,l}$ and each factor
    \begin{equation}
        \binom{n_i}{m_{0,\,l},\,\dots,\,m_{i,\,l}}=\frac{n_l!}{m_{0,\,l}!\cdots,m_{i,\,l}!}
    \end{equation}
    in the congruence of Lucas is a non zero integer prime to $p$ as $n_l,\,m_{0,\,l},\,\dots,\,m_{i,\,l} \leq p-1$. Hence the multinomial coefficient in not zero in $\mathbb{F}_p$.
\end{proof}

Let us recall the following fundamental identity from the theory of characters over finite fields:
\begin{equation}
    \sum_{f \in \mathbb{F}_q} f^m=
    \begin{cases}
        -1 & \text{ if $m$ is a positive multiple of $q-1$},\\
        0 & \text{if $m$ is non-negative integer not divisible by $q-1$}.
    \end{cases}\label{PPFF}
\end{equation}

\begin{definition}\label{DefWeightedSum}
 Let   $\{d_0=0,\,d_1,\,d_2,\dots\}$ be the non-gap sequence of $C$ associated with $\infty$ defined in Definition \ref{DefNonGapSeq}. For any positive integers $m_0,\,\dots,\,m_{i-1}$, we define
\begin{equation}
    \operatorname{WS}_C(m_0,\,\dots,\,m_{i-1}):=
    \left(d_i-d_0\right)m_0+\cdots+\left(d_i-d_{i-1}\right)m_{i-1},
\end{equation}
(here WS stands for weighted sum).
  
\end{definition}

For example, if $C$ is the projective line, then \begin{equation}
        \operatorname{WS}_{\mathbb{P}^1}(m_0,\,\dots,\,m_{i-1})=im_0+(i-1)m_1+\cdots+m_{i-1}.
\end{equation} 
by Example \ref{Exampled_i}. If $C$ is an elliptic curve, then 
\begin{equation}
    \operatorname{WS}_C(m_0,\,\dots,\,m_{i-1})=(i+1)m_0+(i-1)m_1+(i-2)m_2+\cdots+m_{i-1}.
\end{equation}

The following observations are a useful examples and will be used in the proof of our main result. 

\begin{example}

Suppose that the curve $C$ has a genus $g\geq 1$. If the non-gap sequence of $C$ associated with $\infty$ is $\{0,\,g+1,\,g+2,\,\dots \}$, then it follows that
\begin{align}
    \operatorname{WS}_C(m_0,\,\dots,\,m_{i-1})&=(i+g)m_0+(i-1)m_1+(i-2)m_2+\cdots+m_{i-1}\\
    &=\operatorname{WS}_{\mathbb{P}^1}(m_0,\,\dots,\,m_{i-1})+gm_0.
\end{align}
and if the non-gap sequence is $\{0,\,2,\,\dots,\,2g-2,\,2g,\,2g+1,\,2g+2,\,\dots \}$, then we obtain   
\begin{align}
    \operatorname{WS}_C(m_0,\,\dots,\,m_{i-1})&=(i+g)m_0+(i+g-2)m_1+\cdots+(i-g+2)m_{g-1}\\
    &\quad \quad+(i-g)m_g+(i-g-1)m_{g+1}+\cdots+m_{i-1}\\
    &= \operatorname{WS}_{\mathbb{P}^1}(m_0,\,\dots,\,m_{i-1})+\operatorname{WS}_{\mathbb{P}^1}(m_0,\,\dots,\,m_{g-1})
\end{align}
for $i \geq g$, and
\begin{equation}
\operatorname{WS}_C(m_0,\,\dots,\,m_{i-1})=2\operatorname{WS}_{\mathbb{P}^1}(m_0,\,\dots,\,m_{i-1})
\end{equation}
for $i<g$.
\end{example}

The sequence introduced below is used to calculate the valuation of the power sum $S_d(s)$.
\begin{definition}\label{seqG}
    For a positive integer $s$, we define the following sequence $\{G_i \}_{i=0,\,1,\,2\,\dots}$, which depends only on $q$ and $s$, but not on the choice of curve $C$:
\begin{quote}
    for each $i\geq 0$, the  number $G_i$ is the smallest positive multiple of $q-1$ such that the summation 
\begin{equation}
(s-1)+G_0+G_1+G_2+\cdots+G_i
\end{equation}
has no carry over base $p$.  
\end{quote}
\end{definition}

\begin{example}
    Let us consider the case where $q=7$ and $s=223413_{(7)}:=2\cdot7^5+2\cdot 7^4+3\cdot 7^3+4 \cdot 7^2 +1 \cdot 7^1+3\cdot7^0$. As each positive integer is multiple of $q-1$ if and only if so is the sum of digits of its $q$-adic expansion, we have $G_0=24_{(7)}$ and $s-1+G_0=223436_{(7)}$, so we obtain $G_1=1230_{(7)}$ and $s-1+G_0+G_1=224666_{(7)}$. We can continue to obtain $G_2=42000_{(7)}$, $G_3=2400000_{(7)}$, $G_4=24000000_{(7)}$, and so on (in this case the sum of digits of $q$-adic expansion of each $G_i$ is $q-1=6$ by the minimality).
    If $s=3251321_{(7)}$, then we have $G_0=6_{(7)}$, $G_1=240_{(7)}$, $G_2=5100_{(7)}$, $G_3=1410000_{(7)}$, $G_4=42000000_{(7)}$, and so on.
\end{example}

The following lemma is essentially obtained by Sheats and plays a pivotal role in Thakur's proof for his non-vanishing result of MZV's associated to the function field of $\mathbb{P}^{1}$ over $\mathbb{F}_q$.

\begin{lemma}[{\cite{Sheats1998}, \cite[Theorem 1]{Thakur2009a}}]  \label{FandamentalLemma}
Suppose that the tuple $(m_0,\,\dots,\,m_{i-1})$ of positive integers satisfies the following three conditions:
\begin{enumerate}
\item
each of $m_0,\,\dots,\,m_{i-1}$ is a multiple of $q-1$;\label{even condition}
\item \label{condiLucas}
the sum $(s-1)+ m_0+\cdots+m_{i-1}$ has no carry over base $p$;\label{Lucas condition}
\item we have $(m_0,\,\dots,\,m_{i-1}) \neq (G_0,\,\dots,\,G_{i-1})$. \label{conditionnoneq}
\end{enumerate}
Then we have the following strict inequality
\begin{equation}
   \operatorname{WS}_{\mathbb{P}^1}(G_0,\,\dots,\,G_{i-1})<\operatorname{WS}_{\mathbb{P}^1}(m_0,\,\dots,\,m_{i-1}).
\end{equation}
\end{lemma}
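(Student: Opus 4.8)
This is exactly the statement proved by Thakur \cite[Theorem~1]{Thakur2009a} on the basis of the combinatorial work of Sheats \cite{Sheats1998}; rather than reproduce that proof I describe how I would organise it. The first step is to rewrite the weighted sum as a sum of partial sums. Recall that $\operatorname{WS}_{\mathbb{P}^1}(m_0,\dots,m_{i-1})=\sum_{j=0}^{i-1}(i-j)m_j$; setting $P_l:=m_0+m_1+\cdots+m_l$ for $0\le l\le i-1$ and interchanging the order of summation gives
\[
\operatorname{WS}_{\mathbb{P}^1}(m_0,\dots,m_{i-1})=\sum_{l=0}^{i-1}P_l,
\]
and likewise $\operatorname{WS}_{\mathbb{P}^1}(G_0,\dots,G_{i-1})=\sum_{l=0}^{i-1}(G_0+\cdots+G_l)$. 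Thus the lemma follows from the sharper, non-strict assertion $(\ast)$: for every tuple satisfying hypotheses \eqref{even condition} and \eqref{Lucas condition} one has $P_l\ge G_0+G_1+\cdots+G_l$ for every $l$. Indeed, summing $(\ast)$ over $l$ gives $\operatorname{WS}_{\mathbb{P}^1}(G_0,\dots,G_{i-1})\le\operatorname{WS}_{\mathbb{P}^1}(m_0,\dots,m_{i-1})$, and if equality held then $P_l=G_0+\cdots+G_l$ for all $l$, so $m_l=P_l-P_{l-1}=G_l$ for all $l$ (with $P_{-1}:=0$), contradicting \eqref{conditionnoneq}.

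I would prove $(\ast)$ by induction on the length $i$. The case $l=0$ is immediate: if $(s-1)+m_0+\cdots+m_{i-1}$ has no carry over $p$, then neither does $(s-1)+m_0$, and the minimality built into Definition~\ref{seqG} forces $m_0\ge G_0$. If moreover $m_0=G_0$, one checks directly from Definition~\ref{seqG} that the sequence attached to $s':=s+G_0$ is $(G_1,G_2,\dots)$, while $(m_1,\dots,m_{i-1})$ satisfies hypotheses \eqref{even condition} and \eqref{Lucas condition} relative to $s'$; the inductive hypothesis then gives $m_1+\cdots+m_l\ge G_1+\cdots+G_l$ for $1\le l\le i-1$, and adding $m_0=G_0$ yields $(\ast)$. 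The remaining case $m_0>G_0$ is the real content, and I would handle it by an exchange argument: any admissible tuple with $m_0>G_0$ can be replaced by an admissible tuple in which $m_0$ is strictly smaller while no partial sum $P_l$ has increased. Since $m_0$ is a positive integer decreasing at each step, after finitely many exchanges we reach $m_0=G_0$ and conclude by the previous case, the bound $(\ast)$ proved there transferring back because the partial sums only decreased along the way.

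The exchange step is the only genuine difficulty, and it is precisely the combinatorial lemma of Sheats. Its mechanism is that transferring some of the ``digit budget'' of a summand $m_a$ to a later summand $m_b$, place by place over base $p$, automatically preserves the no-carry condition --- the totals of the digits at each base-$p$ place are unchanged --- and does not increase any partial sum (it strictly lowers $P_a,\dots,P_{b-1}$); so the only point to verify is that, when $m_0>G_0$, such a transfer can be carried out while keeping \emph{every} summand a positive multiple of $q-1=p^f-1$ (where $q=p^f$). This is where the rigid shape of base-$p$ expansions of multiples of $p^f-1$ enters --- their base-$q$ digit sums are positive multiples of $q-1$ --- and checking that there is always enough room for the transfer requires the careful bookkeeping (an induction on the number of base-$p$ digits and on $\sum_j m_j$, keeping track of the places where the digits of the $m_j$ saturate the budget left by $s-1$) carried out in \cite{Sheats1998}, and, in the present formulation, in the proof of \cite[Theorem~1]{Thakur2009a}; I would follow that argument step by step. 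I expect this room estimate to be the sole nontrivial obstacle, the reduction to it via $(\ast)$ being routine.
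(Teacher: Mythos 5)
The paper never proves this lemma: it is imported verbatim from Sheats and from \cite[Theorem 1]{Thakur2009a}, so the part of your proposal that simply rests the combinatorial core on those references coincides with what the paper itself does, and is legitimate. Your added scaffolding is mostly sound as a reduction: the identity $\operatorname{WS}_{\mathbb{P}^1}(m_0,\dots,m_{i-1})=\sum_{l=0}^{i-1}P_l$ is correct, the equality analysis recovering $m_l=G_l$ is correct, the base case $m_0\ge G_0$ and the shift $s\mapsto s+G_0$ matching $(G_1,G_2,\dots)$ are correct.

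The gap is that your pivotal statement $(\ast)$ (every admissible tuple dominates the greedy tuple in \emph{all} partial sums) is strictly stronger than the lemma being cited and is not a formal consequence of it: $P_l$ is a difference of weighted sums of two different lengths, and knowing $\operatorname{WS}_{\mathbb{P}^1}$-minimality for every length does not control such differences. So $(\ast)$ cannot be obtained by quoting the lemma; it must be proved, and its proof is exactly the hard content. For $q=p$ it does follow from an easy digit-sum argument (each $m_j$ has base-$p$ digit sum at least $p-1$, so the total digit vector is at least the bottom-filled greedy one), but for $q=p^f$ with $f>1$ your exchange mechanism stalls at precisely the point you flag: transferring digits place by place preserves the no-carry condition and lowers the intermediate partial sums, but generically destroys the condition that each summand remain a positive multiple of $q-1$, and repairing this is the whole difficulty. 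Moreover, Sheats' actual argument is organized around weighted sums of valid compositions (essentially the $\operatorname{WS}$-statement), not around the partial-sum exchange you describe, so the assertion that your exchange step ``is precisely the combinatorial lemma of Sheats'' is an unverified attribution rather than a reduction one can follow step by step. In short: as a citation your treatment matches the paper; as a self-contained proof it is incomplete exactly at the exchange/room estimate, and it additionally needs a check that the literature really yields the stronger statement $(\ast)$ you rely on.
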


In the next section, we give a proof of our main result, and the lamma above plays an important role in our arguments.

\section{Proof of non-vanishing} \label{proof}
In this section, we continue with setting that $C$ is a smooth,  projective and geometrically connected curve over $\mathbb{F}_q$ with an $\mathbb{F}_q$-rational point $\infty$, and our goal is to prove Theorem \ref{main} and Corollary \ref{Cortomain}. We still denote by $\left\{ d_0,\,d_1,\,\dots\right\}$ the non-gap sequence of $C$ associated with $\infty$ (see Definition \ref{DefNonGapSeq}), and put
\begin{equation}
    \operatorname{WS}_C(m_0,\,\dots,\,m_{i-1}):=
    \left(d_i-d_0\right)m_0+\cdots+\left(d_i-d_{i-1}\right)m_{i-1},
\end{equation}
for positive integers $m_0,\,\dots,\,m_{i-1}$. For each $\alpha \in k_\infty$, we denote its $\infty$-adic valuation by $v_\infty(\alpha)$.

\begin{prop}
For positive integers $s$ and $i$, 
 if one  of the two conditions
    \begin{enumerate}[(a)]
        \item the non-gap sequence of $\infty$ is $\{0,\,g+1,\,g+2,\,\dots \}$,
        \item the non-gap sequence of $\infty$ is $\{0,\,2,\,\dots,\,2g-2,\,2g,\,2g+1,\,2g+2,\,\dots \}$,
    \end{enumerate}
    is satisfied, then we have
    \begin{align}
   v_\infty \left(S_{d_i}(s)\right)
    =d_is +\operatorname{WS}_C(G_0,\,\dots,\,G_{i-1}),
\end{align} 
where the power sum $S_{d_i}(s)$ is the power sum given in Definition \ref{DefPowersum} and $\{G_0, G_1, \ldots\}$ are defined in Definition \ref{seqG}.
\end{prop}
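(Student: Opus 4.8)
The plan is to express $S_{d_i}(s)$ as an explicit $\infty$-adically convergent series indexed by ``digit tuples'', to locate within it the unique term of smallest $v_\infty$-valuation, and to read off the answer from the non-archimedean triangle inequality. (We may assume $i\ge 1$; for $i=0$ one has $S_0(s)=1$, which matches the formula with the empty weighted sum.) First I would use the representation $a=f_0\xi_0+\cdots+f_{i-1}\xi_{i-1}+\xi_i$ of a monic element $a$ of degree $d_i$ to write $a=\xi_i(1+u)$ with $u=\sum_{j<i}f_j\,\xi_j/\xi_i$, where $v_\infty(\xi_j/\xi_i)=d_i-d_j>0$ for $j<i$; completeness of $k_\infty$ then makes $a^{-s}=\xi_i^{-s}(1+u)^{-s}=\xi_i^{-s}\sum_{y\ge 0}\binom{-s}{y}u^y$ converge. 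Expanding $u^y$ by the multinomial theorem, summing over $(f_0,\dots,f_{i-1})\in\mathbb{F}_q^{i}$, and invoking \eqref{PPFF} (noting that a factor with exponent $0$ contributes $q=0$ in $\mathbb{F}_q$, so that only positive exponents survive and those must be multiples of $q-1$) yields
\begin{equation}
    S_{d_i}(s)=(-1)^{i}\,\xi_i^{-s}\sum_{(m_0,\dots,m_{i-1})}\binom{-s}{m_0+\cdots+m_{i-1}}\binom{m_0+\cdots+m_{i-1}}{m_0,\dots,m_{i-1}}\prod_{j=0}^{i-1}\left(\frac{\xi_j}{\xi_i}\right)^{m_j},
\end{equation}
the index ranging over all tuples of positive multiples of $q-1$.

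Next I would pin down the valuation of each term. Using \eqref{binomprop1} one rewrites the scalar $\binom{-s}{y}\binom{y}{m_0,\dots,m_{i-1}}$ (with $y=m_0+\cdots+m_{i-1}$), up to sign, as the multinomial coefficient $\binom{(s-1)+m_0+\cdots+m_{i-1}}{\,s-1,\,m_0,\dots,m_{i-1}}$, which by Lemma~\ref{multinomialcriterion} is a unit in $\mathbb{F}_p$ precisely when $(s-1)+m_0+\cdots+m_{i-1}$ has no carry over base $p$, and vanishes otherwise. When it is a unit, the corresponding term has valuation $s\,d_i+\sum_{j}(d_i-d_j)m_j=s\,d_i+\operatorname{WS}_C(m_0,\dots,m_{i-1})$. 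In particular, the tuple $(G_0,\dots,G_{i-1})$ of Definition~\ref{seqG} satisfies conditions \eqref{even condition} and \eqref{Lucas condition} of Lemma~\ref{FandamentalLemma} (each $G_j$ is a positive multiple of $q-1$, and the no-carry property of $(s-1)+G_0+\cdots+G_i$ passes to its truncation), so its term is genuinely present, with valuation $s\,d_i+\operatorname{WS}_C(G_0,\dots,G_{i-1})$.

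The heart of the argument is then to show that this ``$G$-term'' strictly dominates: for every other admissible $(m_0,\dots,m_{i-1})\neq(G_0,\dots,G_{i-1})$ one has $\operatorname{WS}_C(m_0,\dots,m_{i-1})>\operatorname{WS}_C(G_0,\dots,G_{i-1})$. Here I would feed the relations between $\operatorname{WS}_C$ and $\operatorname{WS}_{\mathbb{P}^1}$ recorded in the Example above into Lemma~\ref{FandamentalLemma}. In case (a) one has $\operatorname{WS}_C(m_0,\dots,m_{i-1})=\operatorname{WS}_{\mathbb{P}^1}(m_0,\dots,m_{i-1})+g\,m_0$; since discarding summands preserves the no-carry property, $(s-1)+m_0$ has no carry, hence $m_0\ge G_0$ by the minimality in Definition~\ref{seqG}, and adding $g\,m_0\ge g\,G_0$ to the strict inequality of Lemma~\ref{FandamentalLemma} settles this case. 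In case (b) with $i<g$ one has $\operatorname{WS}_C=2\operatorname{WS}_{\mathbb{P}^1}$ and Lemma~\ref{FandamentalLemma} applies directly; with $i\ge g$ one has $\operatorname{WS}_C(m_0,\dots,m_{i-1})=\operatorname{WS}_{\mathbb{P}^1}(m_0,\dots,m_{i-1})+\operatorname{WS}_{\mathbb{P}^1}(m_0,\dots,m_{g-1})$, where the first summand strictly exceeds its $G$-counterpart by Lemma~\ref{FandamentalLemma} at level $i$, and the second is at least its $G$-counterpart by applying Lemma~\ref{FandamentalLemma} at level $g$ to the admissible truncation $(m_0,\dots,m_{g-1})$ (with equality exactly when that truncation equals $(G_0,\dots,G_{g-1})$).

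Finally, since all these valuations are integers and the $G$-term is the unique nonzero term of minimal valuation, the non-archimedean triangle inequality gives $v_\infty(S_{d_i}(s))=s\,d_i+\operatorname{WS}_C(G_0,\dots,G_{i-1})$, as claimed. I expect the main obstacle to be the domination step of the third paragraph: it rests on the observations that truncations of admissible tuples are again admissible (so that Lemma~\ref{FandamentalLemma} may be invoked at lower levels) and that $G_0$ is the smallest admissible first coordinate, and the case analysis for the two shapes of non-gap sequence must be carried out carefully, in particular the boundary case $i=g$ in (b). The series manipulations of the first two paragraphs are routine given completeness of $k_\infty$ together with \eqref{PPFF}, \eqref{binomprop1} and Lemma~\ref{multinomialcriterion}.
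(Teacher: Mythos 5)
Your proposal is correct and follows essentially the same route as the paper: the same binomial/multinomial expansion of $S_{d_i}(s)$, the same use of \eqref{PPFF}, \eqref{binomprop1} and Lemma \ref{multinomialcriterion} to isolate the admissible tuples, and the same case analysis via Lemma \ref{FandamentalLemma} comparing $\operatorname{WS}_C$ with $\operatorname{WS}_{\mathbb{P}^1}$ to show the $(G_0,\dots,G_{i-1})$-term uniquely minimizes the valuation. Your explicit justification that truncations of admissible tuples remain admissible (giving $m_0\geq G_0$ in case (a) and the weak inequality at level $\min(i,g)$ in case (b)) is in fact slightly more careful than the paper's citation of Lemma \ref{FandamentalLemma} at those points.
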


\begin{proof}
We continue to denote by $\xi_i\in L(d_i \infty) \setminus L((d_i-1)\infty)$ the fixed monic element of degree $d_i$.
As each monic element of degree $d_i$ can be uniquely written in the form
\begin{equation}
    f_0 \xi_0+\cdots +f_{i-1} \xi_{i-1}+ \xi_i
\end{equation}
with $f_0,\,\dots,\,f_{i-1}\in \mathbb{F}_q$, we have
\begin{align}
 S_{d_i}(s)&=\sum_{f_0,\,\dots,\,f_{i-1} \in \mathbb{F}_q} (f_0 \xi_0+\cdots+f_{i-1}\xi_{i-1}+\xi_i)^{-s}\\
&=\xi_i^{-s} \sum_{f_0,\,\dots,\,f_{i-1} \in \mathbb{F}_q} \left(f_0 \frac{\xi_0}{\xi_i}+\cdots+f_{i-1}\frac{\xi_{i-1}}{\xi_i}+1\right)^{-s}\\
\intertext{(using binomial expansion)}
&=\xi_i^{-s}  \sum_{f_0,\dots,\,f_{i-1}  \in \mathbb{F}_q}\  \sum_{y\geq 0} \binom{-s}{y}\left(f_0 \frac{\xi_0}{\xi_i}+\cdots+f_{i-1}\frac{\xi_{i-1}}{\xi_i}\right)^{y}\\
\intertext{(using multinomial expansion)}
&=\xi_i^{-s} \sum_{f_0,\dots,\,f_{i-1}  \in \mathbb{F}_q} \sum_{y\geq 0} \binom{-s}{y} \sum_{m_0,\dots,\,m_{i-1} \geq0} \binom{y}{m_0,\dots,\,m_{i-1} }\cdot \\
&\quad\ \ \, \cdot \left(f_0 \frac{\xi_0}{\xi_i}\right)^{m_0}\cdots\left(f_{i-1}\frac{\xi_{i-1}}{\xi_i}\right)^{m_{i-1}}\\
&=\sum_{y\geq 0} \ \sum_{m_0,\dots,\,m_{i-1} \geq0} \xi_i^{-s} \left(\frac{\xi_0}{\xi_i}\right)^{m_0}\cdots\left(\frac{\xi_{i-1}}{\xi_i}\right)^{m_{i-1}} \binom{-s}{y}
 \cdot \\
&\quad\ \ \, \cdot\binom{y}{m_0,\dots,\,m_{i-1} } \sum_{f_0,\dots,\,f_{i-1}  \in \mathbb{F}_q}f_0 ^{m_0}\cdots f_{i-1}^{m_{i-1}}\\
&=\sum_{y\geq 0} \ \sum_{m_0,\dots,\,m_{i-1} \geq0} \xi_i^{-s} \left(\frac{\xi_0}{\xi_i}\right)^{m_0}\cdots\left(\frac{\xi_{i-1}}{\xi_i}\right)^{m_{i-1}}(-1)^y\binom{s-1+y}{y}
 \cdot \label{expansion}\\
&\quad\ \ \, \cdot \binom{y}{m_0,\dots,\,m_{i-1} } \sum_{f_0,\dots,\,f_{i-1}  \in \mathbb{F}_q}f_0 ^{m_0}\cdots f_{i-1}^{m_{i-1}},  \\
\end{align}
where the last equality is from \eqref{binomprop1}.
Here, by Lemma \ref{multinomialcriterion}, the quantity
\begin{equation}
\binom{s-1+y}{y}\binom{y}{m_0\,\dots,\,m_{i-1} }=\binom{s-1+y}{s-1,\,m_0\,\dots,\,m_{i-1}}
\end{equation}
is non-zero if and only if the tuple $(m_0,\,\dots,\,m_{i-1})$ satisfies condition \eqref{Lucas condition} in Lemma \ref{FandamentalLemma}, and the factor
\begin{equation}
\sum_{f_0,\dots,\,f_{i-1}  \in \mathbb{F}_q}f_0 ^{m_0}\cdots f_{i-1}^{m_{i-1}}
\end{equation}
of the term is nonzero if and only if the tuple $(m_0,\,\dots,\,m_{i-1})$ satisfies condition \eqref{even condition} in Lemma \ref{FandamentalLemma}.
We note that the choice $(m_0,\,m_1,\,\dots,\,m_{i-1})=(G_0,\,G_1,\,\dots,\,G_{i-1})$, whihc is defined in Definition \ref{seqG}, satisfies conditions \eqref{even condition} and \eqref{Lucas condition} of Lemma \ref{FandamentalLemma}. If these two conditions are satisfied, the $\infty$-adic valuation of the term
\begin{align}
    \xi_i^{-s} \left(\frac{\xi_0}{\xi_i}\right)^{m_0}\cdots\left(\frac{\xi_{i-1}}{\xi_i}\right)^{m_{i-1}}(-1)^y\binom{s-1+y}{y}
 \cdot \\
\quad\ \ \, \cdot \binom{y}{m_0,\dots,\,m_{i-1} } \sum_{f_0,\dots,\,f_{i-1}  \in \mathbb{F}_q}f_0 ^{m_0}\cdots f_{i-1}^{m_{i-1}},
\end{align}
is equal to
\begin{align}
    &\ \ \ d_is+\left(d_i-d_0\right)m_0+\cdots+\left(d_{i}-d_{i-1}\right)m_{i-1}\\
    &=d_is +\operatorname{WS}_C(m_0,\,\dots,\,m_{i-1}).
\end{align}

Let us take a tuple $(m_1,\,\dots,\,m_{i-1})$ satisfying \eqref{even condition},  \eqref{condiLucas}, and \eqref{conditionnoneq} in Lemma \ref{FandamentalLemma}. If the condition \eqref{condiordinary} is satisfied, then we have that
\begin{align}
    &\quad \  d_is+\operatorname{WS}_C(m_1,\,\dots,\,m_{i-1})-\big(d_is +\operatorname{WS}_C(G_0,\,\dots,\,G_{i-1})\big)\\
    &=\operatorname{WS}_C(m_0,\,\dots,\,m_{i-1})-\operatorname{WS}_C(G_0,\,\dots,\,G_{i-1})\\
    &=\operatorname{WS}_{\mathbb{P}^1}(m_0,\,\dots,\,m_{i-1})-\operatorname{WS}_{\mathbb{P}^1}(G_0,\,\dots,\,G_{i-1})+g(m_0-G_0)>0
\end{align}
as $\operatorname{WS}_{\mathbb{P}^1}(G_0,\,\dots,\,G_{i-1})<\operatorname{WS}_{\mathbb{P}^1}(m_0,\,\dots,\,m_{i-1})$ and $G_0\leq m_0$ by Lemma \ref{FandamentalLemma}. On the other hand, if the condition \eqref{condihyperelliptic} is satisfied, then
\begin{align}
    &\quad \  d_is+\operatorname{WS}_C(m_1,\,\dots,\,m_{i-1})-\big(d_is+ \operatorname{WS}_C(G_0,\,\dots,\,G_{i-1})\big)\\
    &=\operatorname{WS}_C(m_0,\,\dots,\,m_{i-1})-\operatorname{WS}_C(G_0,\,\dots,\,G_{i-1})\\
    &=\operatorname{WS}_{\mathbb{P}^1}(m_0,\,\dots,\,m_{i-1})-\operatorname{WS}_{\mathbb{P}^1}(G_0,\,\dots,\,G_{i-1})\\
    &\quad \quad +\operatorname{WS}_{\mathbb{P}^1}(m_0,\,\dots,\,m_{i^\prime-1})-\operatorname{WS}_{\mathbb{P}^1}(G_0,\,\dots,\,G_{i^\prime-1})>0
\end{align}
where $i^\prime=\operatorname{min}(i,\,g)$ as $\operatorname{WS}_{\mathbb{P}^1}(G_0,$ $\dots,\,G_{i-1})<\operatorname{WS}_{\mathbb{P}^1}(m_0,\,\dots,\,m_{i-1})$ and $\operatorname{WS}_{\mathbb{P}^1}(G_0,$ $\dots,$ $G_{i^\prime-1})\leq \operatorname{WS}_{\mathbb{P}^1}(m_0,\,\dots,\,m_{i^\prime-1})$ by Lemma \ref{FandamentalLemma}.

Therefore,
\begin{align}
    & \ \ \ v_\infty \left(S_{d_i}(s)\right)\\
    &= v_\infty \Biggl( \xi_i^{-s} \left(\frac{\xi_0}{\xi_i}\right)^{G_0}\cdots\left(\frac{\xi_{i-1}}{\xi_i}\right)^{G_{i-1}}(-1)^y\binom{s-1+y}{y}
 \cdot \\
&\quad\ \ \, \cdot \binom{y}{G_0,\dots,\,G_{i-1} } \sum_{f_0,\dots,\,f_{i-1}  \in \mathbb{F}_q}f_0 ^{G_0}\cdots f_{i-1}^{G_{i-1}} \Biggr)\\
&=d_is +\operatorname{WS}_C(G_0,\,\dots,\,G_{i-1}).
\end{align}
\end{proof}

{\bf{Proof of Theorem \ref{main}}}. Now, we give a proof of Theorem \ref{main}.
By this proposition, we have
\begin{equation}
    v_\infty \left(S_{d_{i-1}}(s)\right)-v_\infty \left(S_{d_i}(s)\right)
    =(d_{i-1}-d_{i})(s+G_0+\cdots+G_{i-1})<0.
\end{equation}
Hence we obtain
    \begin{align}
        v_\infty \Bigl(\zeta_A(s_1,\,\dots,\,s_r)\Bigr)=v_\infty \Bigl(S_{d_{r-1}}(s_1)S_{d_{r-2}}(s_2)\cdots S_{d_0}(s_r)\Bigr)<\infty.
    \end{align}
Therefore, the multiple zeta value in question is nonzero as desired. \qed

\vskip.5\baselineskip

\noindent\textbf{Acknowledgement}.
The author would like to thank his advisor H.~Furusho for his profound instruction and continuous encouragements. He is also grateful to C.-Y.~Chang for suggesting the topic treated in this paper. This work was {\color{black}partially} supported by JST SPRING, Grant Number JPMJSP2125, {\color{black} and C.-Y. Chang's NSTC Grant 111-2628-M-007-002-}.



\end{document}